\documentclass[a4paper,12pt]{article}
\usepackage[utf8]{inputenc}
\usepackage{amssymb,amsthm,amsmath}

\usepackage{titlesec}

\titleformat*{\section}{\bfseries}

\usepackage[margin = 2.5cm]{geometry}

\newtheorem{thm}{Theorem}
\newtheorem{lmm}[thm]{Lemma}

\newcommand{\RR}{\mathbb{R}}
\newcommand{\vl}{^\mathsf{v}}
\newcommand{\tl}{^\mathsf{c}}
\newcommand{\slt}{\mathring{T}}
\newcommand{\vm}{\mathfrak{X}}

\newcommand{\ir}[1]{\mathcal{#1}}
\newcommand{\hll}[1]{\widetilde{#1}}
\newcommand{\prcf}[3]{
\frac{\partial #1}{\partial #2^{#3}}
}

\let\OLDthebibliography\thebibliography
\renewcommand\thebibliography[1]{
  \OLDthebibliography{#1}
  \small
  \setlength{\parskip}{0pt}
  \setlength{\itemsep}{0pt plus 0.3ex}
}

\begin{document}
\title{\vspace{-6ex}A generalization and short proof of a theorem of Hano on affine vector fields\vspace{-1ex}}
\author{D.~Cs.~Kert\'esz \and R.~L.~Lovas\footnote{Both authors were supported by the Hungarian Scientific Research Fund (OTKA) Grant K-111651}\vspace{-2ex}}
\maketitle

\vspace{-6ex}

\begin{abstract}
We prove that a bounded affine vector field on a complete Finsler manifold is a Killing vector field. This generalizes the analogous result of Hano for Riemannian manifolds \cite{MR0076394}. Even though our result is more general, the proof is significantly simpler.
\end{abstract}
{\it Keywords:} affine vector field, Killing field, Finsler manifold.\\
{\it 2010 Mathematics Subject Classification:} 53B40, 53C60.

\section{Introduction}
Yano showed that affine vector fields on a compact orientable Riemannian manifold are Killing fields \cite{Yanokilling}; the proof was based on integral formulas. Hano found a generalization: bounded affine vector fields on a complete Riemannian manifold are Killing fields. The proof relied on the de Rham decomposition, and special properties of irreducible Riemannian manifolds. A similar proof can be found in \cite{MR1393940}. We show that Hano's result is true for the much more general Finsler manifolds, using only the Euler--Lagrange equation.

\section{Definitions and prerequisites}

Throughout, $M$ is a second countable and smooth Hausdorff manifold; $\tau\colon TM\to M$ is the tangent bundle, $\mathring\tau\colon \slt M\to M$ is the slit tangent bundle of $M$. If $\varphi\colon M\to N$ is a smooth mapping between manifolds, $\varphi_\ast\colon TM\to TN$ stands for its derivative. 

We are going to work on the tangent manifold, where we use two kinds of lifts of vector fields on the base manifold. The \emph{vertical lift} $X\vl$ of a vector field $X\in\vm(M)$ is the velocity field of the global flow
\[
 (t,v)\in\RR\times TM\longmapsto v+tX(\tau(v)) \in TM
\]
on $TM$. If $\varphi^X\colon \ir D_X\subset\RR\times M\to M$ is the maximal local flow of $X\in\vm(M)$ and
 \[
   \hll{\ir D}_X:=\{(t,v)\in\RR\times TM\mid (t,\tau(v))\in\ir D_X\},
 \]
then
\[
 (t,v)\in \hll{\ir D}_X \longmapsto(\varphi^X_t)_*(v) \in TM
\]
is a local flow on $TM$, whose velocity field is called the \emph{complete lift} of $X$, denoted by $X\tl$. The \emph{Liouville vector field} $C$ on $TM$ is the velocity field of the flow of positive dilations:
\[
 (t,v)\in\RR\times TM\longmapsto e^tv\in TM.
\]
It is clear that a smooth function $f$ on $\slt M$ is $k^+$-homogeneous ($k\in\mathbb Z$) if and only if $Cf = kf$.

A continuous function $F$ on $TM$ is a \emph{Finsler function} for $M$ if it is smooth on $\slt M$, $1^+$-homogeneous, $F\upharpoonright \slt M >0$, and for any $p\in M$ and $u\in \slt_pM$, the symmetric bilinear form $(E\upharpoonright T_pM)''(u)$ is non-degenerate (hence positive definite), where $E:=\frac 12F^2$. A \emph{Finsler manifold} is a manifold together with a Finsler function. 

If $(M,F)$ is a Finsler manifold, then there exists a unique second-order vector field $S\in\vm(\slt M)$ such that a curve $\gamma$ in $M$ is a geodesic of $(M,F)$ if and only if $S\circ\dot\gamma=\ddot\gamma$. Another characterization of $S$ is that
\begin{equation}\label{eq:EL}
S(X\vl E) - X\tl E = 0\qquad \mbox{for all } X\in\vm(M).
\end{equation}
This form of the Euler--Lagrange equation is due to Crampin (see, e.g., \cite[p.~348]{MR892315} or \cite[p.~16]{SZLK11}). It can be derived directly from the elementary form $\left(\prcf Eyi\circ\dot\gamma\right)'-\prcf Exi\circ\dot\gamma=0$ using the local formulae for $X\vl$ and $X\tl$. 

A Finsler manifold is said to be \emph{forward complete} if the domains of its maximal geodesics are not bounded from above, and \emph{complete} if the domain of its maximal geodesics is $\RR$. For many equivalent characterizations of completeness, see \cite[\S 6.6]{BCS}.

A vector field $X$ on a Finsler manifold $(M,F)$ is \emph{affine} if its flow preserves geodesics, and it is a \emph{Killing field} if its flow preserves the Finsler function, i.e., $F\circ(\varphi^X_t)_*=F$  for all possible $t\in\RR$. Both properties can be expressed in terms of the complete lift of $X$: $X$ is affine if and only if $[X\tl,S]=0$, and $X$ is a Killing field if and only if $X\tl E =0$.

\section{Proof of the result}
The key of our argument is the following simple observation. It is in fact a disguised special case of Exercise 5.4.3 from \cite{BCS}.
\begin{lmm}\label{lmm:XveXce}
 If $X$ is an affine vector field on a Finsler manifold $(M,F)$ and $\gamma$ is a geodesic, then for all $t$ and $t_0$ in the domain of $\gamma$ we have 
 \[
 X\vl E(\dot\gamma(t)) = X\vl E(\dot\gamma(t_0)) + (t-t_0) X\tl E(\dot\gamma(t_0)).
 \]
\end{lmm}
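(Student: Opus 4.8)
The plan is to fix an affine field $X$ and a nonconstant geodesic $\gamma$, and to study the scalar function $\phi(t):=X\vl E(\dot\gamma(t))$, showing that it is affine in $t$ by computing $\phi'$ and $\phi''$. The observation that unlocks every derivative is that the geodesic equation $S\circ\dot\gamma=\ddot\gamma$ says exactly that $\dot\gamma$ is an integral curve of the spray $S$ on $\slt M$; consequently, for any smooth $f$ on $\slt M$,
\[
\frac{d}{dt}\bigl(f\circ\dot\gamma\bigr)(t)=(Sf)(\dot\gamma(t)).
\]
Since $\gamma$ is nonconstant it has nowhere-vanishing speed, so $\dot\gamma$ takes values in $\slt M$ and all the lifted functions are smooth along it.

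Applying this to $f=X\vl E$ and invoking the Euler--Lagrange equation \eqref{eq:EL}, I would first obtain
\[
\phi'(t)=\bigl(S(X\vl E)\bigr)(\dot\gamma(t))=X\tl E(\dot\gamma(t)).
\]
This already reproduces the candidate slope at $t_0$ and matches the right-hand side of the asserted identity at $t=t_0$. Differentiating once more, using the same principle now with $f=X\tl E$, gives $\phi''(t)=\bigl(S(X\tl E)\bigr)(\dot\gamma(t))$.

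The crux is to see that $\phi''\equiv0$, and here both remaining hypotheses enter. The affine condition $[X\tl,S]=0$, evaluated on the function $E$, reads $S(X\tl E)=X\tl(SE)$; and one has $SE=0$, since the energy $E$ is a first integral of the geodesic spray (equivalently, this is a quick consequence of \eqref{eq:EL} together with the $2^+$-homogeneity $CE=2E$ of the energy). Therefore $S(X\tl E)=X\tl(SE)=0$, so $\phi''(t)=0$ for every $t$ in the domain of $\gamma$.

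Finally, $\phi''\equiv0$ means $\phi$ is an affine function of $t$; since $\phi(t_0)=X\vl E(\dot\gamma(t_0))$ and the constant slope equals $\phi'(t_0)=X\tl E(\dot\gamma(t_0))$, integrating from $t_0$ to $t$ yields the stated formula. The one step that genuinely deserves care is the computation of $\phi''$: it is where the affine hypothesis is actually used, and it relies on having the auxiliary identity $SE=0$ in hand. Everything else is a mechanical consequence of the integral-curve characterization of geodesics combined with \eqref{eq:EL}.
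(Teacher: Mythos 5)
Your proposal is correct and takes essentially the same route as the paper's own proof: you differentiate $X\vl E\circ\dot\gamma$ along the integral curve $\dot\gamma$ of $S$, use \eqref{eq:EL} to identify the first derivative with $X\tl E\circ\dot\gamma$, and use $[X\tl,S]=0$ together with $SE=0$ to show the second derivative vanishes, so the function is affine. Your only addition is the explicit remark that a nonconstant geodesic keeps $\dot\gamma$ inside $\slt M$ (where all these functions are smooth), which merely makes precise a convention the paper leaves implicit.
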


\begin{proof}
 Since $X$ is affine, we have $[X\tl,S]=0$. Geodesics have constant speed, hence $SE=0$. From these we get
 \[
  0=[X\tl,S]E = X\tl(SE) -S(X\tl E)=-S(X\tl E).
 \]
 Since $\gamma$ is a geodesic, $S\circ\dot\gamma=\ddot\gamma$, and we have
\begin{align*}
 (X\vl E\circ\dot\gamma)' &= S(X\vl E)\circ\dot\gamma\overset{\eqref{eq:EL}}=X\tl E\circ\dot\gamma,\\
 (X\vl E\circ\dot\gamma)'' &= (X\tl E\circ\dot\gamma)' = S(X\tl E)\circ\dot\gamma=0.
\end{align*}
Therefore $X\vl E\circ\dot\gamma$ is an affine function, and our claim follows.
\end{proof}

\begin{thm}
Let $(M,F)$ be a Finsler manifold, $X$ an affine vector field, and suppose that one of the following conditions holds:
\begin{itemize}
 \item[(1)] $F \circ X$ is bounded, and $(M,F)$ is complete;
 \item[(2)] $F \circ X$ and $F \circ (-X)$ are bounded, and $(M,F)$ is forward complete.
\end{itemize}
Then $X$ is a Killing vector field.
\end{thm}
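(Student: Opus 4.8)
The plan is to combine the affine structure supplied by Lemma~\ref{lmm:XveXce} — which forces $X\vl E\circ\dot\gamma$ to be an \emph{affine} function of the geodesic parameter, with slope $X\tl E(\dot\gamma(t_0))$ — with a convexity estimate that keeps $X\vl E\circ\dot\gamma$ \emph{bounded}. An affine function that stays bounded on a half-line, or merely bounded above on all of $\RR$, must be constant, so its slope vanishes; since every $v\in\slt M$ is the velocity $\dot\gamma(t_0)$ of some geodesic, this gives $X\tl E=0$, i.e.\ that $X$ is Killing.

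The geometric heart is a pointwise bound on $X\vl E$. First I would note that $X\vl E=F\cdot X\vl F$ on $\slt M$, since $E=\tfrac12F^2$. Next, for fixed $p$ the restriction $F\upharpoonright T_pM$ is convex (this is what positive definiteness of the fundamental tensor $(E\upharpoonright T_pM)''$ encodes), and $X\vl F(v)$ is the directional derivative $(F\upharpoonright T_pM)'(v)(X(p))$. The fundamental inequality of Finsler geometry, $(F\upharpoonright T_pM)'(v)(w)\le F(w)$ — immediate from convexity together with $1^+$-homogeneity, Euler's relation giving $(F\upharpoonright T_pM)'(v)(v)=F(v)$ — then yields
\[
 X\vl E(v)=F(v)\,X\vl F(v)\le F(v)\,F(X(p)),\qquad p=\tau(v).
\]
Applying the same inequality to $-X$ and using $(-X)\vl=-X\vl$ gives the lower bound $X\vl E(v)\ge -F(v)\,F(-X(p))$.

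Now fix $v\in\slt M$ and let $\gamma$ be the geodesic with $\dot\gamma(t_0)=v$. Geodesics have constant speed, so $F(\dot\gamma(t))\equiv F(v)=:c$, and the two estimates give
\[
 -c\,F(-X(\gamma(t)))\le X\vl E(\dot\gamma(t))\le c\,F(X(\gamma(t))).
\]
Under hypothesis~(2), both $F\circ X$ and $F\circ(-X)$ are bounded, so $X\vl E\circ\dot\gamma$ is bounded on the forward ray $[t_0,\infty)$ supplied by forward completeness; an affine function bounded on a half-line is constant, forcing $X\tl E(v)=0$. Under hypothesis~(1) only the upper bound is available, but completeness makes $\gamma$ defined on all of $\RR$, and an affine function bounded above on $\RR$ is again constant, so once more $X\tl E(v)=0$. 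As $v\in\slt M$ was arbitrary, $X\tl E=0$ and $X$ is Killing.

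I expect the only real subtlety to be the matching of the two hypotheses with the two boundedness regimes: the one-sided bound in~(1) genuinely needs two-sided completeness (an affine function can be bounded above yet nonconstant on a half-line), whereas the two-sided bound in~(2) gets by with forward completeness alone. Establishing the convexity inequality is routine, but it is the single place where the Finsler — as opposed to merely metric — structure enters the argument.
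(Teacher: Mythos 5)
Your proposal is correct and follows essentially the same route as the paper: the fundamental inequality bounds $X\vl E$ along a geodesic, and Lemma~\ref{lmm:XveXce} together with the fact that an affine function bounded above on $\RR$ (case (1)) or bounded on a forward half-line (case (2)) must be constant forces $X\tl E=0$. The only difference is cosmetic: the paper restricts to the indicatrix $U(TM)=F^{-1}(\{1\})$ and then extends by the $2^+$-homogeneity of $X\tl E$ (via $[C,X\tl]=0$), whereas you work at arbitrary $v\in\slt M$ and carry the constant speed $c=F(v)$ through the estimate, which lets you skip the homogeneity step altogether.
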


\begin{proof}
First we prove that $X\vl E$ is bounded from above on $U(TM):=F^{-1}(\{1\})$ if (1) holds, and it is bounded from above and from below if (2) holds. For any $v\in U(TM)$, setting $p:=\tau(v)$, we have
\[
X\vl E(v) =   F(v) X\vl F(v) = X\vl F(v) = (F\upharpoonright T_pM)'(v)(X(p))\leq F(X(p)),
\]
where in the last step we used the fundamental inequality (see \cite[p.~7]{BCS} or \cite[Proposition~9.1.37]{CSF}). In a similar way, we obtain
\[
X\vl E(v) = (F \upharpoonright T_p M)'(v)(X(p)) = -(F \upharpoonright T_pM)'(v)(-X(p))\geq -F (-X(p)). 
\]
Over $U(TM)$ these two inequalities give $-F \circ (-X) \circ \tau\leq X\vl E\leq F \circ X \circ \tau$, from which our first assertion follows.

Now we show that $X\tl E=0$, and hence $X$ is a Killing field. It suffices to prove it on $U(TM)$, because $X\tl E$ is $2^+$-homogeneous. Indeed, the flows of $X\tl$ and $C$ clearly commute, hence $[X\tl,C]=0$, and we have $C(X\tl E) = [C,X\tl]E + X\tl(C E) = 2X\tl E$. 

So fix $v\in U(TM)$ and let $\gamma$ be the maximal geodesic with $\dot\gamma(0)=v$. Then Lemma~\ref{lmm:XveXce} gives
 \[
  X\vl E(\dot\gamma(t)) = X\vl E(\dot\gamma(0))+ t X\tl E(\dot\gamma(0)) = X\vl E(v) + t X\tl E(v)
 \]
for any real number $t$ in case (1) and for any positive real number $t$ in case (2). Geodesics have constant speed, hence $\dot\gamma$ remains inside $U(TM)$, and the left-hand side of the above formula has to be bounded from above in case (1), and it has to be bounded from above and below in case (2), which is possible only if $X\tl E(v) = 0$.
\end{proof}
As an immediate corollary we have
\begin{thm}[Hano]
 A bounded affine vector field on a complete Riemannian manifold is a Killing field.
\end{thm}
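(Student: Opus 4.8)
The plan is to recognize the Riemannian setting as a special instance of the Finsler setting and then invoke the theorem under condition~(1). Given a complete Riemannian manifold $(M,g)$, I would first exhibit $g$ as a Finsler structure by putting $F(v):=\sqrt{g(v,v)}$ for $v\in TM$. I would then verify the Finsler axioms: $F$ is continuous on $TM$ and smooth on $\slt M$ (away from the zero section $g(v,v)>0$, so the square root is smooth there), it is $1^+$-homogeneous, it is positive on $\slt M$, and the fiberwise second derivative of $E:=\tfrac12 F^2=\tfrac12 g(\cdot,\cdot)$ equals $g$ restricted to the fiber, which is positive definite and hence non-degenerate. Thus $(M,F)$ is a Finsler manifold.

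Next I would match the remaining notions. The geodesics of $(M,F)$ coincide with the Riemannian geodesics of $(M,g)$, so the maximal geodesics of $(M,F)$ have domain $\RR$ exactly when $(M,g)$ is complete; hence $(M,F)$ is complete. Likewise, a vector field is affine in the Riemannian sense precisely when its flow preserves the common geodesics, i.e. when it is affine in the Finsler sense, and its flow preserves $g$ precisely when it preserves $F=\sqrt{g}$, so the Riemannian and Finsler notions of Killing field agree as well.

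Finally, boundedness of $X$ in the Riemannian sense means that the $g$-norm $\lvert X\rvert = F\circ X$ is bounded on $M$; this is exactly the hypothesis that $F\circ X$ is bounded in condition~(1). Since $(M,F)$ is complete, the theorem applies and yields that $X$ is a Killing field. Because $F$ is reversible, $F\circ(-X)=F\circ X$ is bounded as well, so condition~(2) would serve equally if one only assumed forward completeness.

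I do not expect a genuine obstacle here, as the statement is an immediate specialization. The only points requiring a word of care are the non-degeneracy axiom, which reduces to positive definiteness of $g$, and the identification of the Riemannian and Finsler notions of affine and Killing vector field; both follow because the two structures share the same geodesics and because $F$ determines and is determined by $g$.
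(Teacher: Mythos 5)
Your proposal is correct and is exactly the paper's route: the paper states Hano's theorem as an immediate corollary of the Finsler theorem, and your verification that $F:=\sqrt{g(\cdot,\cdot)}$ satisfies the Finsler axioms and that the notions of completeness, affine, Killing, and boundedness all specialize correctly simply makes explicit what the paper leaves implicit. Your closing remark that reversibility of $F$ lets condition~(2) apply under mere forward completeness is a sound (if unneeded) bonus observation.
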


Since compact Finsler manifolds are complete, we also have
\begin{thm}
 An affine vector field on a compact Finsler manifold is a Killing vector field.
\end{thm}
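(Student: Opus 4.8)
The plan is to deduce this statement as a special case of the main Theorem by verifying its hypothesis (1): that $F\circ X$ is bounded and that $(M,F)$ is complete. Once both are established, the Theorem immediately yields that $X$ is a Killing field.

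The boundedness of $F\circ X$ is the easy half. Since $X$ is a smooth vector field it is in particular a continuous map $X\colon M\to TM$, and $F$ is continuous on $TM$; hence $F\circ X\colon M\to\RR$ is continuous. A continuous real-valued function on a compact space is bounded, so $F\circ X$ is bounded. (The same argument applied to $-X$ shows $F\circ(-X)$ is bounded, so hypothesis (2) would serve equally well.)

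The completeness of $(M,F)$ is the substantive point, and the step I expect to require the most care, although it is a standard fact about compact Finsler manifolds. The idea is that unit-speed geodesics are precisely the integral curves of the geodesic spray $S$ that lie in the unit sphere bundle $U(TM)=F^{-1}(\{1\})$. Because geodesics have constant speed we have $SE=0$, so $S$ is tangent to the level sets of $E$ and hence to $U(TM)$; thus $S$ restricts to a genuine vector field on $U(TM)$. When $M$ is compact, $U(TM)$ is itself compact, being a fiber bundle over the compact base $M$ whose indicatrix fibers $F^{-1}(\{1\})\cap T_pM$ are compact (closed and bounded, by the strong convexity that positive definiteness of $(E\upharpoonright T_pM)''$ provides). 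Since a smooth vector field on a compact manifold is complete, every integral curve of $S\upharpoonright U(TM)$ is defined on all of $\RR$; that is, every unit-speed geodesic extends to $\RR$, and reparametrizing shows the same for geodesics of arbitrary speed. Hence $(M,F)$ is complete.

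With both hypotheses of condition (1) verified, the main Theorem applies and gives that $X$ is a Killing vector field, which is exactly the assertion.
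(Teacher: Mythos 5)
Your proof is correct and follows the same route as the paper, which deduces the statement from the main Theorem via condition (1) with the one-line remark that compact Finsler manifolds are complete. The only difference is that you additionally supply a proof of that standard completeness fact (the geodesic spray $S$ satisfies $SE=0$, hence restricts to a vector field on the compact unit sphere bundle $U(TM)$, whose integral curves are therefore defined on all of $\RR$, and positive reparametrization handles arbitrary speeds), which the paper simply cites as known.
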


\section*{Acknowledgements}
The authors are grateful to J\'ozsef Szilasi and Bernadett Aradi for their suggestions that improved the paper.

.


\begin{thebibliography}{1}

\bibitem{BCS}
{\sc D.~Bao, S.-S. Chern, and Z.~Shen}, {\em An {I}ntroduction to
  {R}iemann--{F}insler geometry}, Springer, 2000.

\bibitem{MR892315}
{\sc M.~Crampin and F.~A.~E. Pirani}, {\em Applicable differential geometry},
  vol.~59, Cambridge University Press, Cambridge, 1986.

\bibitem{MR0076394}
{\sc J.-i. Hano}, {\em On affine transformations of a {R}iemannian manifold},
  Nagoya Math. J., {\bf 9} (1955),  99--109.

\bibitem{MR1393940}
{\sc S.~Kobayashi and K.~Nomizu}, {\em Foundations of differential geometry.
  {V}ol. {I}}, John Wiley \& Sons, Inc., New York, 1996.

\bibitem{SZLK11}
{\sc J.~Szilasi, R.~L. Lovas, and D.~{\relax Cs}. Kert\'esz}, {\em Several ways
  to a {B}erwald manifold -- and some steps beyond}, Extracta Mathematicae,
  {\bf 26} (2011),  89--130.

\bibitem{CSF}
\leavevmode\vrule height 2pt depth -1.6pt width 23pt, {\em Connections,
  {S}prays and {F}insler {S}tructures}, World Scientific, 2014.

\bibitem{Yanokilling}
{\sc K.~Yano}, {\em On harmonic and {K}illing vector fields}, Ann. of Math.
  (2), {\bf 55} (1952),  38--45.

\end{thebibliography}

\end{document}